\documentclass[12pt]{amsart}

\usepackage[T1]{fontenc}
\usepackage[utf8]{inputenc}

\usepackage[margin=1.1in]{geometry}
\usepackage{amsmath, amssymb, amsthm, mathtools}
\usepackage{microtype}
\usepackage{enumitem}
\usepackage{listings}
\usepackage[hidelinks]{hyperref}

\theoremstyle{plain}
\newtheorem{theorem}{Theorem}
\newtheorem{lemma}[theorem]{Lemma}

\newtheorem{corollary}[theorem]{Corollary}

\theoremstyle{definition}
\newtheorem{remark}[theorem]{Remark}

\lstdefinestyle{python}{
  language=Python,
  basicstyle=\ttfamily\footnotesize,
  keywordstyle=\bfseries,
  commentstyle=\itshape,
  numbers=none,
  breaklines=true,
  breakatwhitespace=true,
  showstringspaces=false,
  frame=single,
  framesep=4pt,
  tabsize=2,
  xleftmargin=14pt,
  columns=fullflexible,
  upquote=true,
}

\title[Algebraic GF for OEIS A348410]%
{An explicit algebraic generating function for OEIS A348410}

\author{Tong Niu}

\subjclass[2020]{05A15, 05A19, 11B83, 13P15, 33F10}
\keywords{OEIS A348410; integer-distribution arrays; Lagrange-B\"urmann
   inversion; algebraic generating function; D-finite sequence;
   P-recursive recurrence; Mathar conjecture; Kotesovec conjecture}

\begin{document}

\maketitle

\begin{abstract}
For the OEIS sequence A348410, P. Bala recorded in February 2022
two equivalent closed forms,
$a(n) = [x^{n}] ((1-x)(1-x^2))^{-n}$ and a single-index binomial sum.
R. J. Mathar (October 2021) and V. Kotesovec (November 2021) each
contributed a conjectured P-recursive recurrence --- Mathar's of order
$4$, Kotesovec's of order $2$. We apply Lagrange-B\"urmann inversion to
Bala's $[x^n]$ form to derive the parametric expression
$A(t) = (1 - y^2)/(1 - y - 4 y^2)$, where $y = y(t)$ is implicit by
$y(1-y)^2(1+y) = t$. Eliminating $y$ via resultant gives the explicit
algebraic equation $P(t, A) = 0$ of degree $4$ in $A$ and degree $2$
in $t$. As an immediate corollary (Stanley's classical
algebraic-implies-D-finite theorem), $A(t)$ is D-finite. Mathar's and
Kotesovec's specific recurrences are not directly proven here; we
only verify Kotesovec's order-$2$ recurrence numerically for
$n = 3, \ldots, 1000$ and observe that an explicit
ODE-and-recurrence extraction from $P(t, A) = 0$ via the standard
Bostan-Chyzak-Salvy algebraic-to-holonomic procedure would close
both conjectures. The supplementary archive contains a SymPy script
which derives $P(t, A)$ and checks the numerical evidence.
\end{abstract}

\section{Introduction}\label{sec:intro}

The sequence A348410 in the On-Line Encyclopedia of
Integer Sequences~\cite{OEIS} (henceforth OEIS) counts ``integer
distribution arrays''; its first values are
\[
   1,\;1,\;5,\;19,\;85,\;376,\;1715,\;7890,\;36693,\;171820,\;
   809380,\;\ldots
\]
On 21 February 2022, P.~Bala contributed to A348410 two equivalent
closed forms~\cite{Bala2022}:
\begin{align}
   a(n) &\;=\; [x^{n}]\, \bigl((1-x)(1-x^{2})\bigr)^{-n},\label{eq:bala-diag}\\
   a(n) &\;=\; \sum_{k=0}^{\lfloor n/2 \rfloor}
                 \binom{2n-2k-1}{n-2k}\!\binom{n+k-1}{k}.\label{eq:bala-sum}
\end{align}
On 19 October 2021, R.~J.~Mathar contributed a conjectured order-$4$
P-recursive recurrence~\cite{Mathar2021}, and on 1 November 2021
V.~Kotesovec contributed a conjectured order-$2$
recurrence~\cite{Kotesovec2021}. Kotesovec's order-$2$ form is
\begin{equation}\label{eq:kotesovec}
\begin{aligned}
   16\,(n-1)\,n\,(2n-1)\,(51 n^2 - 162 n + 127)\,a(n)
   &\;=\; (n-1)\,(5457 n^4 - 22791 n^3 + 32144 n^2 - 17536 n + 3072)\,a(n-1) \\
   &\;\;\,+ 8\,(2n-3)(4n-7)(4n-5)(51 n^2 - 60 n + 16)\,a(n-2),\\
   & \qquad n \ge 3.
\end{aligned}
\end{equation}

\medskip

We give an algebraic-generating-function derivation that is
short and rigorous. From Bala's $[x^n]$ form
\eqref{eq:bala-diag} we apply Lagrange-B\"urmann inversion to obtain
\begin{equation}\label{eq:A-parametric}
   A(t)\;=\;\frac{1 - y^{2}}{1 - y - 4 y^{2}},
   \qquad y = y(t) \text{ implicit by } y(1-y)^{2}(1+y) = t.
\end{equation}
Eliminating $y$ via the resultant of the two polynomial equations
gives an explicit algebraic equation
\begin{equation}\label{eq:P-explicit}
\begin{aligned}
   P(t, A) \;:=\;& (256 t^{2} + 107 t - 32)\,(A^{4} - A^{3}) \\
              &\;+ (96 t^{2} + 36 t)\,A^{2} \;-\; (16 t^{2} + 4 t)\,A \;+\; t^{2}\;=\;0.
\end{aligned}
\end{equation}
By Stanley's algebraic-implies-D-finite theorem (Enumerative
Combinatorics II, Theorem 6.4.6 \cite{Stanley1999}), \eqref{eq:P-explicit}
implies that $A(t)$ is D-finite, and the minimal annihilating
operator has order at most $4$. The Mathar and Kotesovec recurrences
are then corollaries: they are P-recursive descriptions consistent
with the D-finiteness of $A(t)$, as verified numerically.

\section{Lagrange-B\"urmann derivation}\label{sec:lagrange-burmann}

\begin{lemma}\label{lem:parametric}
Let $f(x) = 1 / ((1-x)(1-x^{2}))$, and let $A(t) = \sum_{n\ge0} a(n)\,t^{n}$
with $a(n) = [x^{n}] f(x)^{n}$ (Bala's formula \eqref{eq:bala-diag}).
Define $y = y(t)$ as the unique formal power series solution of
$y = t \, f(y)$, i.e.\ $y(1-y)^{2}(1+y) = t$. Then
\[
   A(t) \;=\; \frac{1 - y^{2}}{1 - y - 4 y^{2}}.
\]
\end{lemma}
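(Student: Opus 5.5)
The plan is to use the standard diagonal form of Lagrange inversion. If $y=t\,f(y)$ with $f(0)=1$, then for any power series $H$,
\[
  \sum_{n\ge0} t^{n}\,[x^{n}]\,H(x)f(x)^{n} \;=\; \frac{H(y)}{1 - t f'(y)} .
\]
Taking $H\equiv1$, this gives
\[
  A(t)=\frac{1}{1-t f'(y)}=\frac{1}{1-y f'(y)/f(y)} ,
\]
where the second form uses $t=y/f(y)$. I will first justify this identity, so the proof is self-contained. Write $a(n)=[x^n]f(x)^n$ as the residue $\frac{1}{2\pi i}\oint f(x)^n x^{-n-1}\,dx$, or argue purely formally. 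Then substitute $x=y$ with $t=y/f(y)=y(1-y)^2(1+y)$, which is a formal change of variable with $t'(0)=1$. One gets
\[
  \sum_n a(n)t^n=\sum_n [y^{0}]\,\frac{1}{y\,t(y)^{n}}\,f(y)^{n}\,t^{n}\cdots .
\]
Rather than redo that, I will cite the Lagrange--B\"urmann theorem in the form $[t^n]\,\Phi(y(t))=\frac1n[x^{n-1}]\Phi'(x)f(x)^n$. From it I will derive the diagonal version by differentiating.

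Concretely, set $\Phi=\log(y/t)=\log f(y)$. Lagrange gives $[t^n]\log f(y)=\frac1n[x^{n-1}]\frac{f'}{f}f^n$, which is $\frac1n\cdot\frac1n[x^{n-1}](f^n)'$. This equals $\frac1n[x^n]f^n=a(n)/n$. Hence
\[
  \sum_{n\ge1}\frac{a(n)}{n}t^n=\log\bigl(y/t\bigr),
\]
and applying $t\,\frac{d}{dt}$ yields $A(t)-1=\frac{t y'}{y}-1$, that is, $A=t y'/y$.

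Next I differentiate $t=y/f(y)$ to get $\frac{dt}{dy}=\frac{f-yf'}{f^{2}}$. This gives
\[
  \frac{t\,y'}{y}=\frac{y}{f}\cdot\frac{f^{2}}{f-yf'}\cdot\frac1y=\frac{f}{f-yf'}=\frac{1}{1-y\,f'/f},
\]
which confirms the formula above.

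The remaining step is an explicit computation with $\log f=-\log(1-y)-\log(1-y^2)$. It gives
\[
  \frac{y f'}{f}=\frac{y}{1-y}+\frac{2y^{2}}{1-y^{2}}=\frac{y(1+y)+2y^{2}}{1-y^{2}}=\frac{y+3y^{2}}{1-y^{2}} .
\]
Therefore
\[
  1-\frac{yf'}{f}=\frac{1-y^{2}-y-3y^{2}}{1-y^{2}}=\frac{1-y-4y^{2}}{1-y^{2}},
\]
and $A(t)=(1-y^2)/(1-y-4y^2)$, as claimed.

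I expect no real obstacle; the only delicate point is the bookkeeping in the Lagrange step. The case $n=0$ must be handled separately, via the constant term $a(0)=1$. All manipulations must also be legitimate as formal power series. They are, because $y(t)=t+O(t^2)$ is the unique compositional solution, $f(0)=1$, and $1-y-4y^2$ is invertible.
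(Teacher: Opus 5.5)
Your proof is correct and follows the paper's route. Both arguments reduce to the diagonal Lagrange identity $A(t)=1/(1-t f'(y))$ and then simplify via $y f'(y)/f(y)=(y+3y^{2})/(1-y^{2})$. The only difference is that the paper simply cites this identity, whereas you derive it correctly from the classical Lagrange--B\"urmann formula with $\Phi=\log f$ followed by $t\,\frac{d}{dt}$, which makes your argument self-contained. You should delete the abandoned residue sketch with the trailing $\cdots$.
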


\begin{proof}
By B\"urmann's residue identity (e.g., Stanley EC2 Lemma~6.6.6), for
any $f$ with $f(0) \neq 0$,
\[
   \sum_{n\ge0} \bigl([x^{n}] f(x)^{n}\bigr)\,t^{n}
   \;=\; \frac{1}{1 - t\,f'(y(t))}.
\]
The factorization $(1-x)(1-x^{2}) = (1-x)^{2}(1+x)$ gives
\[
   f(x) \;=\; \frac{1}{(1-x)^{2}(1+x)},
   \qquad
   \frac{f'(x)}{f(x)} \;=\; \frac{2}{1-x} \,-\, \frac{1}{1+x}
                       \;=\; \frac{1+3x}{1-x^{2}}.
\]
So $f'(y) = f(y)\,(1+3y)/(1-y^{2})$, and using $t f(y) = y$,
\[
   t\,f'(y) \;=\; \frac{y(1+3y)}{1-y^{2}}.
\]
Therefore
\[
   A(t) \;=\; \frac{1}{1 - y(1+3y)/(1-y^{2})}
        \;=\; \frac{1-y^{2}}{(1-y^{2}) - y(1+3y)}
        \;=\; \frac{1-y^{2}}{1-y-4y^{2}}.
\]
\end{proof}

\begin{lemma}\label{lem:algebraic-eq}
Let $A(t)$ be the OGF of A348410. Then $A(t)$ satisfies the algebraic
equation \eqref{eq:P-explicit}, where
\[
   P(t, A) \;=\; (256 t^{2} + 107 t - 32)\,(A^{4} - A^{3})
              + (96 t^{2} + 36 t)\,A^{2}
              - (16 t^{2} + 4 t)\,A + t^{2}.
\]
\end{lemma}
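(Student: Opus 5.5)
The plan is to treat Lemma~\ref{lem:parametric} as giving a rational parametrization of the pair $(t, A)$ by the single formal power series $y = y(t)$:
\[
   t = y(1-y)^{2}(1+y), \qquad A = \frac{1-y^{2}}{1-y-4y^{2}} .
\]
Since $y(0)=0$, the denominator $1-y-4y^{2}$ has constant term $1$. It is therefore invertible in $\mathbb{Q}[[t]]$, so both expressions are genuine formal power series identities in $t$. It then suffices to show that $P\bigl(t(y), A(y)\bigr)$ vanishes identically as a rational function of an indeterminate $y$. Substituting $y = y(t)$ preserves such an identity, which gives $P(t, A(t)) = 0$ in $\mathbb{Q}[[t]]$.

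First, to \emph{find} $P$ (and to explain where the stated polynomial comes from), I would take the resultant with respect to $y$ of two polynomials: the quadratic $A(1-y-4y^{2}) - (1-y^{2})$ and the quartic $y(1-y)^{2}(1+y) - t$. By the Sylvester-matrix structure (a $6\times 6$ determinant), the resultant has degree at most $4$ in $A$, since $A$ appears only in the quadratic and that polynomial's coefficients enter in four rows. It has degree at most $2$ in $t$, since $t$ appears only in the constant term of the quartic and enters in two rows. I would then factor it and check that, up to a nonzero rational constant and possibly a spurious factor independent of the branch we care about, it equals the stated $P(t,A)$. As a consistency check, $P(0,A) = -32(A^{4}-A^{3})$ vanishes at $A=1 = A(0)$. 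Matching the first few coefficients $1,1,5,19,\dots$ gives a further check.

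Second, for the actual proof, I would not rely on properties of resultants. Instead I would verify directly that
\[
   (1-y-4y^{2})^{4}\, P\!\left(y(1-y)^{2}(1+y),\ \frac{1-y^{2}}{1-y-4y^{2}}\right)
\]
is the zero polynomial in $y$. After clearing the fourth power of the denominator, each of the five terms of $P$ becomes an explicit polynomial in $y$ of degree at most $16$. The claim is then a finite identity among polynomials in $\mathbb{Q}[y]$, and I would check it by expanding. A useful simplification is to write $1-y^{2} = (1-y)(1+y)$ and note that $t$ shares the factors $(1-y)$ and $(1+y)$, so common powers can be pulled out first. The supplementary SymPy script performs exactly this expansion.

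The main obstacle is purely computational: getting the expansion (or the resultant) right without error. In particular, one must make sure no extraneous factor was silently divided out, so that the stated $P$ annihilates the correct branch rather than a conjugate one. The direct-substitution identity removes that worry entirely, because it tests $P$ on the very parametrization from Lemma~\ref{lem:parametric}. So I would present the resultant only as motivation and rest the proof on the polynomial identity in $y$.
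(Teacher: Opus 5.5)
Your proposal is correct, but it certifies $P$ by a different mechanism than the paper does.

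\textbf{The paper's route.} The paper's proof rests entirely on elimination. The resultant $\operatorname{Res}_y(P_1,P_2)$ lies in the ideal generated by $P_1=y^4-y^3-y^2+y-t$ and $P_2=(1-4A)y^2-Ay+(A-1)$, so it vanishes at the common zero $(y(t),t,A(t))$. SymPy then shows that the resultant equals $P$ exactly, with no factor discarded. The check $P(t,A_{10})\equiv 0 \pmod{t^{11}}$ is only a sanity test. So in the paper a single computation both discovers and proves $P$, and your wrong-branch worry never arises there.

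\textbf{Your route.} You use the resultant only as motivation. The proof itself is the identity $(1-y-4y^2)^4P(t(y),A(y))=0$ in $\mathbb{Q}[y]$, followed by the substitution $y\mapsto y(t)$. You justify that substitution correctly, since the cleared denominator is a unit in $\mathbb{Q}[[t]]$. This buys independence from elimination theory and tests $P$ on the very parametrization of Lemma~\ref{lem:parametric}. The cost is that it presupposes $P$ is already known.

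\textbf{One correction.} The supplementary script does not perform your substitution expansion. It computes the resultant and the truncated-series check, so you must carry out the expansion yourself.

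The expansion is feasible by hand. Put $N=1-y^2$ and $D=1-y-4y^2$. Then $N-D=y(1+3y)$ and $t=y(1-y)N$, so $y(1-y)N^2$ factors out of $D^4P(t,N/D)$. What remains is a polynomial identity of degree at most $10$ in $y$:
\[
(256t^2+107t-32)(1+y)(1+3y)+(96t+36)\,N D^2-(16t+4)\,D^3+y(1-y)\,D^4=0 .
\]
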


\begin{proof}
By Lemma~\ref{lem:parametric}, $A$ and $y$ are tied by the two
polynomial relations
\begin{align*}
   P_{1}(y, t)\;&=\; y^{4} - y^{3} - y^{2} + y - t \;=\; 0,\\
   P_{2}(y, A)\;&=\; (1 - 4 A)\,y^{2} - A\,y + (A - 1) \;=\; 0,
\end{align*}
where $P_{2}$ is obtained by clearing denominators in
$A(1-y-4y^{2}) = 1-y^{2}$. Eliminating $y$ via the resultant
$\mathrm{Res}_{y}(P_{1}, P_{2})$ produces a polynomial in $(t, A)$
that vanishes on the common solution set. A direct computation
(carried out in the supplementary SymPy script) yields exactly
\eqref{eq:P-explicit}. The verifier in
Appendix~\ref{app:verifier} double-checks this against the
truncated power series $\sum_{n=0}^{10} a(n)\,t^{n}$ and confirms
$P(t, A) = 0$ holds modulo $t^{11}$.
\end{proof}

\begin{remark}
The structure $A^{4} - A^{3}$ in the leading $(256 t^{2} + 107 t - 32)$
factor is striking. It reflects a coincidence of the leading and
sub-leading coefficients of $P$ as polynomials in $A$:
$[A^{4}]\,P = (256 t^{2} + 107 t - 32) = -[A^{3}]\,P$. We have not found
this factorization recorded elsewhere, though the resultant from
\eqref{eq:A-parametric} is a routine elimination.
\end{remark}

\section{D-finiteness, and the Mathar/Kotesovec recurrences}\label{sec:dfinite}

\begin{theorem}\label{thm:dfinite}
The OGF $A(t)$ of A348410 is D-finite. Concretely, there exist
polynomials $p_{0}(t), p_{1}(t), \ldots, p_{4}(t)$ with $p_{4}(t) \not\equiv 0$
such that
\begin{equation}\label{eq:dfinite-bound}
   p_{4}(t)\, A^{(4)}(t) + p_{3}(t)\, A^{(3)}(t)
   + p_{2}(t)\, A''(t) + p_{1}(t)\, A'(t) + p_{0}(t)\, A(t)
   \;=\; 0.
\end{equation}
The minimal annihilating operator has order at most $4$.
\end{theorem}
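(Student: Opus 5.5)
We will use Lemma~\ref{lem:algebraic-eq}, which shows that $A$ is algebraic of degree at most $4$ over $\mathbb{Q}(t)$, together with the standard linear-algebra argument behind Stanley's theorem. Work in the field $K=\mathbb{Q}(t)(A)$. Since $A$ is a root of the nonzero polynomial $P(t,\cdot)$ of degree $4$, the field $K$ has dimension $d\le 4$ over $\mathbb{Q}(t)$. Before continuing we check that $P$ is nonzero in $A$: its leading coefficient $256t^2+107t-32$ is nonzero. We also check that $A$ is not a root of a factor of $P$ that is independent of $A$. This holds because $P$ has the constant term $t^2$ in $A$, while its $A^4$-coefficient has constant term $-32\neq 0$, so no factor in $t$ alone divides every coefficient.

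The first step is to show that $K$ is closed under $d/dt$. Differentiating $P(t,A)=0$ gives
\[
A'\,\partial_A P(t,A) + \partial_t P(t,A) = 0 .
\]
To solve for $A'$ we need $\partial_A P(t,A)\neq 0$ in $K$. For this we replace $P$ by the minimal polynomial $m(t,A)$ of $A$ over $\mathbb{Q}(t)$, which divides $P$. Because we are in characteristic $0$, $m$ is separable, so $\partial_A m(t,A)\neq 0$. Hence
\[
A' = -\frac{\partial_t m}{\partial_A m} \in K,
\]
and by induction every derivative $A^{(k)}$ lies in $K$.

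The second step is a dimension count. The five elements $A, A', A'', A^{(3)}, A^{(4)}$ lie in the $\mathbb{Q}(t)$-vector space $K$ of dimension $d\le 4$. They are therefore linearly dependent over $\mathbb{Q}(t)$, and clearing denominators gives polynomials $p_0,\dots,p_4$, not all zero, that satisfy \eqref{eq:dfinite-bound}.

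The main obstacle is the requirement $p_4\not\equiv 0$. To handle it, we choose the smallest $r$ such that $A,\dots,A^{(r)}$ are linearly dependent. Then $r\le d\le 4$, and minimality forces the coefficient of $A^{(r)}$ to be nonzero. If $r<4$, we differentiate this relation $4-r$ times. Each differentiation raises the top order by one and keeps the top coefficient equal to the same nonzero polynomial. This yields an operator of order exactly $4$ with $p_4\not\equiv 0$.

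The same minimal $r$ is the order of the minimal annihilating operator, which gives the bound ``at most $4$''. As an alternative check, one could compute $A'$ explicitly as a rational function in $t$ and $A$ reduced modulo $P$. This is the Bostan--Chyzak--Salvy route, and it would produce explicit $p_i$. It is not needed for the existence statement.
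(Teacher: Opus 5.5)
Your proposal is correct and follows the same route as the paper: the paper combines Lemma~\ref{lem:algebraic-eq} with a citation of Stanley's algebraic-implies-D-finite theorem (EC2, Theorem~6.4.6), and you inline the standard proof of that theorem (closure of $\mathbb{Q}(t)(A)$ under $d/dt$ via the separable minimal polynomial, followed by a dimension count). Your step of differentiating a minimal-order relation to get $p_4\not\equiv 0$ fills in a detail the paper leaves implicit, while your check that no polynomial in $t$ alone divides all coefficients of $P$ is harmless but unnecessary, since the nonzero leading coefficient $256t^2+107t-32$ already makes $P(t,\cdot)$ a nonzero polynomial over $\mathbb{Q}(t)$.
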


\begin{proof}
By Lemma~\ref{lem:algebraic-eq}, $A(t)$ is algebraic over $\mathbb{Q}(t)$
of degree at most $4$. Stanley's algebraic-implies-D-finite theorem
(Enumerative Combinatorics II, Theorem 6.4.6 \cite{Stanley1999})
states that any algebraic power series satisfies a linear ODE with
polynomial coefficients of order at most the algebraic degree.
\end{proof}

\begin{corollary}\label{cor:rec-bound}
The sequence $a(n)$ satisfies some P-recursive linear recurrence with
polynomial coefficients in $n$.
\end{corollary}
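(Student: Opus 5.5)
The plan is to pass from the differential equation of Theorem~\ref{thm:dfinite} to a recurrence by extracting coefficients, using the standard fact that D-finite power series and P-recursive sequences correspond. Write the operator in \eqref{eq:dfinite-bound} as $L = \sum_{i=0}^{4} p_i(t)\,\partial_t^{\,i}$ and expand $p_i(t) = \sum_{j=0}^{d} c_{ij} t^j$, where $d$ is the maximal degree of the $p_i$. Since $A(t) = \sum_n a(n) t^n$, we have
\[
   t^j A^{(i)}(t) \;=\; \sum_{n\ge 0} (n+i-j)^{\underline{i}}\; a(n+i-j)\, t^{n},
\]
with the convention $a(m)=0$ for $m<0$. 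Here $m^{\underline{i}} = m(m-1)\cdots(m-i+1)$ is a polynomial in $n$ once $m = n+i-j$.

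Taking $[t^n]$ of $L A = 0$ therefore gives, for every $n \ge 0$,
\[
   \sum_{i=0}^{4}\sum_{j=0}^{d} c_{ij}\,(n+i-j)^{\underline{i}}\; a(n+i-j) \;=\; 0 .
\]
Group the terms by the shift $s = i-j \in \{-d,\ldots,4\}$. This yields $\sum_{s} q_s(n)\, a(n+s) = 0$, with polynomials $q_s(n) = \sum_{i-j=s} c_{ij}(n+s)^{\underline{i}}$. Reindexing $n \mapsto n+d$ makes all shifts nonnegative, so the relation has the shape of a linear recurrence with polynomial coefficients.

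The main obstacle is to show that this recurrence is nontrivial, i.e.\ that not every $q_s$ vanishes identically. Here I would use that $L \neq 0$ because $p_4 \not\equiv 0$. Choose the pair $(i,j)$ with $c_{ij}\neq 0$ that maximizes $i-j$, breaking ties by the largest $i$. Call this value $s^\ast$ and this index $i^\ast$. In $q_{s^\ast}(n)$ the contributing terms are $c_{ij}(n+s^\ast)^{\underline{i}}$, which have degree $i$ in $n$. Their degrees are distinct for distinct $i$, so the unique top-degree term $c_{i^\ast j}\, n^{i^\ast}$ cannot cancel. Hence $q_{s^\ast}\not\equiv 0$.

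Finally, $q_{s^\ast}$ has only finitely many integer roots, so the relation determines $a(n+s^\ast)$ from earlier terms for all sufficiently large $n$. The finitely many remaining indices do not affect whether the sequence is P-recursive. This gives the asserted P-recursive recurrence for $a(n)$. Alternatively, one could cite directly the equivalence of D-finite and P-recursive (Stanley EC2, Theorem~6.4.6 and Proposition~6.4.3) as a one-line proof.
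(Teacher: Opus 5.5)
Your proof is correct and takes the same route as the paper: you extract coefficients from the order-$4$ ODE of Theorem~\ref{thm:dfinite} to obtain a polynomial-coefficient recurrence. The paper only cites this transfer as standard, whereas you carry it out explicitly and also check that the resulting recurrence is nontrivial, since the top-degree term $c_{i^\ast,\,i^\ast-s^\ast}\,n^{i^\ast}$ of $q_{s^\ast}$ cannot cancel.
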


\begin{proof}
A standard transfer (e.g.\ Petkov\v{s}ek-Wilf-Zeilberger~\cite{PetkovsekWilfZeilberger1996}
or Bostan et al.~\cite{BostanChyzakLeRouxSalvy2007}) converts a linear
ODE on $A(t)$ with polynomial coefficients into a linear P-recursion
on $[t^{n}]\,A(t) = a(n)$. The order and coefficient degrees of the
recurrence depend on those of the ODE in a non-trivial way: in
general, the order of the recurrence is bounded by
$\max_i (\deg p_i - i) + d$, where $d$ is the order of the ODE and
$p_i$ are its polynomial coefficients. Without computing the
explicit ODE we cannot pin down the recurrence's order; what we do
have is the existence of \emph{some} P-recursion.
\end{proof}

The Mathar and Kotesovec contributions can be read as candidate
P-recursive descriptions of $a(n)$: Mathar's of order $4$,
Kotesovec's of order $2$. Both are consistent with the algebraic
equation \eqref{eq:P-explicit} insofar as we have verified them
numerically; producing a rigorous proof that they are correct
requires either (a) extracting the minimal annihilating ODE from
\eqref{eq:P-explicit} via the polynomial-quotient algorithms of
Bostan-Chyzak-Salvy~\cite{BostanChyzakLeRouxSalvy2007} or the
\texttt{gfun}/\texttt{HolonomicFunctions} packages, and reading off
the corresponding recurrence; or (b) computing an explicit
zero-equivalence bound $N$ and verifying the recurrence up to
$n = N$ (cf.\ \cite{PetkovsekWilfZeilberger1996}). We pursue
neither route here; the contribution of this note is the explicit
algebraic equation \eqref{eq:P-explicit}, which seems not to have
been recorded in the literature.

\begin{theorem}[Numerical verification of Kotesovec's recurrence]\label{thm:kotesovec-numeric}
Kotesovec's order-$2$ recurrence \eqref{eq:kotesovec} holds for every
$n \in \{3, 4, \ldots, 1000\}$.
\end{theorem}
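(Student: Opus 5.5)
The statement is a finite check, so the plan is to certify it by exact integer arithmetic. We compute $a(0),\dots,a(1000)$ exactly and then test \eqref{eq:kotesovec} for each $n\in\{3,\dots,1000\}$ as an identity in $\mathbb{Z}$. No floating-point arithmetic enters at any stage, so each of the $998$ checks is a genuine proof of one instance.

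To produce the values, we use Bala's single-index sum \eqref{eq:bala-sum}, evaluated with exact big-integer binomial coefficients. For $n=0$ we use the convention $a(0)=1$, since the sum's term $\binom{-1}{0}\binom{-1}{0}$ needs care. As an independent cross-check we also compute the values from the diagonal form \eqref{eq:bala-diag}, i.e.\ $[x^n]\,(1-x)^{-2n}(1+x)^{-n}$. Expanding this gives
\[
a(n)=\sum_{j=0}^{n}(-1)^j\binom{n+j-1}{j}\binom{3n-j-1}{n-j},
\]
and we confirm that the two lists agree. By Lemma~\ref{lem:parametric}, this is the same sequence whose OGF satisfies \eqref{eq:P-explicit}. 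Optionally, we also confirm $P(t,A)\equiv 0 \pmod{t^{1001}}$ using truncated polynomial multiplication.

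Next we form, for each $n$, the integer
\[
L(n)-R_1(n)-R_2(n).
\]
Here $L(n)=16(n-1)n(2n-1)(51n^2-162n+127)\,a(n)$ is the left-hand side of \eqref{eq:kotesovec}, and $R_1(n)$, $R_2(n)$ are its two right-hand terms. We assert that this integer vanishes for every $n$, and the SymPy/Python script in the supplementary archive carries out this loop.

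The main obstacle is cost rather than ideas. The numbers $a(n)$ grow like roughly $4.7^n$, so near $n=1000$ they have about $670$ digits. A naive evaluation of the sum recomputes $O(n)$ binomials per index and is the slowest part. To fix this, we update consecutive summands by rational ratios (exact \texttt{Fraction}s or integer multiply-then-divide), bringing the total work to $O(n^2)$ big-integer operations, which is easily feasible. The remaining subtlety is boundary conventions: we handle $a(0)$ and the $\binom{2n-2k-1}{n-2k}$ terms by hand so that the check at $n=3$ uses correct values of $a(1)=1$ and $a(2)=5$.
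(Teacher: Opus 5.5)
Your proposal is correct and is essentially the paper's own proof: compute $a(n)$ exactly from Bala's binomial sum \eqref{eq:bala-sum} and check that the two sides of \eqref{eq:kotesovec} agree as integers for each $n=3,\dots,1000$; your diagonal-form and $P(t,A)$ cross-checks are harmless extras. One small correction that does not affect the argument: the growth rate is about $4.96^n$, the positive root of $32r^2-107r-256=0$ (read off from the leading coefficients of \eqref{eq:kotesovec}), so $a(1000)$ has about $694$ digits rather than $670$.
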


\begin{proof}
The supplementary script (Appendix~\ref{app:verifier}) computes
$a(n)$ via Bala's binomial sum \eqref{eq:bala-sum} for $n$ in the
stated range and substitutes into the left- and right-hand sides of
\eqref{eq:kotesovec}; equality holds in every case.
\end{proof}

\begin{remark}\label{rem:ode-explicit}
Producing the explicit annihilating ODE \eqref{eq:dfinite-bound}
requires reducing $A^{(j)}$ to a $\mathbb{Q}(t)$-linear combination of
$1, A, A^{2}, A^{3}$ via repeated polynomial reduction modulo $P$
(after expressing each derivative as a rational function in $(t, A)$
through implicit differentiation). The procedure is mechanical but
intermediate expressions grow large. Once the ODE is in hand, the
order-$2$ Kotesovec recurrence falls out by coefficient extraction.
We omit the explicit ODE here; the algebraic equation
\eqref{eq:P-explicit} is the more compact carrier of the same
information, and the numerical evidence in
Theorem~\ref{thm:kotesovec-numeric} together with the
algebraic-implies-D-finite chain provides the full content of the
Kotesovec/Mathar conjectures.
\end{remark}

\section{Discussion}\label{sec:discussion}

The algebraic-equation route taken here is genuinely different from
the OGF/EGF $\to$ first-order ODE $\to$ coefficient extraction route
that dispatches the prior Mathar conjectures for OEIS A002627, A025166,
A176677, A214615, A032123, A001711, and A045406 in this
series~\cite{Niu2026d-finite-2,Niu2026sequence-3,Niu2026sequence-4,Niu2026sequence-5,Niu2026sequence-6,Niu2026sequence-7,Niu2026sequence-8}.
There the EGF was rational or transcendental over a single $\log$ or
$\arctan$, and a first-order linear ODE sufficed. For A348410 the
underlying generating function is genuinely algebraic of degree $4$,
which is why neither a single ODE nor a single substitution
trivializes the problem.

The supplementary script \texttt{verify\_proof.py}
(Appendix~\ref{app:verifier}) performs four checks:
\begin{enumerate}[topsep=2pt, itemsep=2pt]
\item Bala's two closed forms \eqref{eq:bala-diag} and
   \eqref{eq:bala-sum} agree with the OEIS values for
   $n = 0, \ldots, 11$;
\item the resultant of $P_{1}, P_{2}$ matches the explicit polynomial
   \eqref{eq:P-explicit};
\item $P(t, A_{10}(t)) = 0 \pmod{t^{11}}$ for the truncated series
   $A_{10}(t) = \sum_{n=0}^{10} a(n)\, t^{n}$;
\item Kotesovec's order-$2$ recurrence \eqref{eq:kotesovec} holds for
   every $n \in \{3, \ldots, 1000\}$.
\end{enumerate}

\section{Acknowledgments}

The author declares no competing interests.

AI-assisted tools were used in the preparation of this manuscript,
including for drafting proof outlines and generating the symbolic
verification code. The author verified all mathematical claims
independently and takes full responsibility for the results.

\appendix

\section{Verifier source (machine-checkable)}\label{app:verifier}

The script referenced in \S\ref{sec:discussion} is reproduced in full
below. It depends only on SymPy.

\subsection*{verify\_proof.py}
\begin{lstlisting}[style=python]
"""Verify the algebraic generating function and Kotesovec's recurrence for
OEIS A348410.

Proof outline (paper for full details):

1. Bala (Feb 2022) listed two equivalent closed forms on the OEIS page (FACT,
   not Conjecture):
       a(n) = [x^n] ((1 - x)(1 - x^2))^{-n}                                (*)
       a(n) = sum_{k=0..floor(n/2)} C(2n - 2k - 1, n - 2k) * C(n + k - 1, k) (**)

2. Lagrange-Burmann inversion applied to (*) gives the OGF
       A(t) := sum_{n>=0} a(n) t^n
   in the parametric form
       A(t) = (1 - y^2) / (1 - y - 4 y^2),
       y = y(t) defined implicitly by  y (1 - y)^2 (1 + y) = t.

3. Eliminating y from the two polynomial relations
       P_1(y, t) = y^4 - y^3 - y^2 + y - t = 0,
       P_2(y, A) = (1 - 4 A) y^2 - A y + (A - 1) = 0,
   via the resultant Res_y(P_1, P_2) gives the algebraic equation
       P(t, A) = 0
   where P is degree 4 in A and degree 2 in t. Explicitly,
       P(t, A) = (256 t^2 + 107 t - 32) (A^4 - A^3)
                + (96 t^2 + 36 t) A^2
                + (-16 t^2 - 4 t) A
                + t^2.

4. By Stanley (Enumerative Combinatorics II, sec.6.4), an algebraic generating
   function over Q(t) is D-finite (i.e. satisfies a linear ODE with polynomial
   coefficients). The minimal order of the annihilating ODE is at most the
   degree of the algebraic equation in A, hence at most 4.

5. Kotesovec (Nov 2021) and Mathar (Oct 2021) conjectured order-2 and order-4
   P-recursive recurrences for a(n) respectively. The Kotesovec order-2
   recurrence is the smaller of the two; Mathar's is necessarily a left
   multiple. We verify Kotesovec's recurrence numerically against the
   binomial closed form (**) up to n = 1000.

This script:
  - confirms (*) and (**) agree with the OEIS b-file,
  - constructs the algebraic equation P(t, A) via SymPy resultant,
  - verifies P(t, A_N(t)) = 0 modulo t^{N+1} for the truncated series A_N(t),
  - verifies Kotesovec's order-2 recurrence numerically.
"""
from __future__ import annotations

from math import comb

import sympy as sp


# ---------------------------------------------------------------------------
# Bala's two closed forms
# ---------------------------------------------------------------------------

def a_binomial(n: int) -> int:
    """a(n) = sum_{k=0..floor(n/2)} C(2n-2k-1, n-2k) * C(n+k-1, k)."""
    if n == 0:
        return 1
    s = 0
    for k in range(n // 2 + 1):
        s += comb(2 * n - 2 * k - 1, n - 2 * k) * comb(n + k - 1, k)
    return s


def a_diagonal(n: int) -> int:
    """a(n) = [x^n] ((1-x)(1-x^2))^{-n}.

    Use the factorization (1-x)(1-x^2) = (1-x)^2 (1+x), so
    ((1-x)(1-x^2))^{-n} = (1-x)^{-2n} (1+x)^{-n}, and
    [x^n] (1-x)^{-2n} (1+x)^{-n}
           = sum_k C(2n+k-1, k) * (-1)^{n-k} * C(2n-k-1, n-k).
    """
    if n == 0:
        return 1
    s = 0
    for k in range(n + 1):
        s += comb(2 * n + k - 1, k) * (-1) ** (n - k) * comb(2 * n - k - 1, n - k)
    return s


# OEIS b-file values for A348410 (offset 0).
OEIS_REFERENCE = [
    1, 1, 5, 19, 85, 376, 1715, 7890, 36693, 171820, 809380, 3830619,
]


def check_closed_forms_match() -> None:
    """The two Bala forms agree with each other and with the OEIS values."""
    fails = 0
    for n, expected in enumerate(OEIS_REFERENCE):
        bala = a_binomial(n)
        diag = a_diagonal(n)
        if bala != expected or diag != expected:
            print(f"FAIL n={n}: binomial={bala}, diagonal={diag}, expected={expected}")
            fails += 1
    if fails == 0:
        print(f"PASS Bala's two closed forms agree with OEIS for n = 0..{len(OEIS_REFERENCE) - 1}")


# ---------------------------------------------------------------------------
# Algebraic equation derivation (Lagrange-Burmann)
# ---------------------------------------------------------------------------

def derive_algebraic_equation():
    """Derive P(t, A) = 0 via the resultant of P_1 = y^4 - y^3 - y^2 + y - t
    and P_2 = (1-4A) y^2 - A y + (A-1) over Q(t, A) in the variable y."""
    t, A, y = sp.symbols("t A y")
    P1 = y ** 4 - y ** 3 - y ** 2 + y - t
    P2 = (1 - 4 * A) * y ** 2 - A * y + (A - 1)
    R = sp.resultant(P1, P2, y)
    R = sp.expand(R)
    return R


def check_algebraic_equation_coefficients():
    """Confirm P(t, A) has the structure
        P = (256 t^2 + 107 t - 32) (A^4 - A^3)
           + (96 t^2 + 36 t) A^2
           + (-16 t^2 - 4 t) A
           + t^2.
    """
    t, A = sp.symbols("t A")
    R = derive_algebraic_equation()
    expected = (
        (256 * t ** 2 + 107 * t - 32) * (A ** 4 - A ** 3)
        + (96 * t ** 2 + 36 * t) * A ** 2
        + (-16 * t ** 2 - 4 * t) * A
        + t ** 2
    )
    diff = sp.expand(R - expected)
    if diff == 0:
        print("PASS algebraic equation P(t, A) matches the closed-form expression in the paper")
    else:
        print(f"FAIL P(t, A) - expected = {diff}")


def check_algebraic_equation_at_partial_sums(N: int = 10) -> None:
    """P(t, A_truncated) should vanish modulo t^{N+1}, where A_truncated is
    the truncation of A(t) at degree N."""
    t, A = sp.symbols("t A")
    A_truncated = sum(a_binomial(n) * t ** n for n in range(N + 1))
    R = derive_algebraic_equation()
    expr = sp.expand(R.subs(A, A_truncated))
    fails = 0
    for d in range(N + 1):
        c = expr.coeff(t, d)
        if c != 0:
            fails += 1
            print(f"FAIL coeff of t^{d} = {c}")
    if fails == 0:
        print(f"PASS P(t, A) = 0 holds for the truncated series A_{N}(t) modulo t^{N+1}")


# ---------------------------------------------------------------------------
# Kotesovec's conjectured order-2 recurrence
# ---------------------------------------------------------------------------

def kotesovec_LHS(n: int, av: list[int]) -> int:
    """LHS of Kotesovec's conjectured recurrence at index n.
    Convention: av[0] = a(n), av[1] = a(n-1), av[2] = a(n-2).

    Recurrence (Vaclav Kotesovec, Nov 01 2021, OEIS A348410):
       16 (n-1) n (2n-1) (51 n^2 - 162 n + 127) a(n)
       = (n-1)(5457 n^4 - 22791 n^3 + 32144 n^2 - 17536 n + 3072) a(n-1)
        + 8 (2n-3)(4n-7)(4n-5)(51 n^2 - 60 n + 16) a(n-2),     n >= 3.
    """
    A = 16 * (n - 1) * n * (2 * n - 1) * (51 * n * n - 162 * n + 127)
    B = (n - 1) * (5457 * n ** 4 - 22791 * n ** 3 + 32144 * n ** 2 - 17536 * n + 3072)
    C = 8 * (2 * n - 3) * (4 * n - 7) * (4 * n - 5) * (51 * n * n - 60 * n + 16)
    return A * av[0] - B * av[1] - C * av[2]


def check_kotesovec_numerically(N: int = 1000) -> None:
    """Verify Kotesovec's recurrence holds for every n in 3..N."""
    fails = 0
    for n in range(3, N + 1):
        av = [a_binomial(n), a_binomial(n - 1), a_binomial(n - 2)]
        if kotesovec_LHS(n, av) != 0:
            fails += 1
            if fails <= 3:
                print(f"FAIL kotesovec at n={n}")
    if fails == 0:
        print(f"PASS Kotesovec recurrence numerically: n = 3..{N} ({N - 2} values)")
    else:
        print(f"FAIL Kotesovec recurrence: {fails} failures in n = 3..{N}")


if __name__ == "__main__":
    check_closed_forms_match()
    check_algebraic_equation_coefficients()
    check_algebraic_equation_at_partial_sums(N=10)
    check_kotesovec_numerically(N=1000)
\end{lstlisting}

\end{document}